\title{Existence and unicity of co-moments in multisymplectic geometry}
\author{Leonid Ryvkin\textsuperscript{$\star$}}
\thanks{\textsuperscript{$\star$}The first author wants to thank the Gerhard C. Starck Stiftung for financial and moral support.}
\address{%
Fakult\"at f\"ur Mathematik\\
Ruhr-Universit\"at Bochum\\
44780 Bochum\\
\mbox{Germany}
}
\email{leonid.ryvkin@rub.de}
\author{Tilmann Wurzbacher}
\address{%
Institut \'Elie Cartan\\
Universit\'e de Lorraine et
C.N.R.S.\\
57045 Metz, France}
\curraddr{%
Fakult\"at f\"ur Mathematik\\
Ruhr-Universit\"at Bochum\\
44780 Bochum\\
\mbox{Germany}
}
\email{tilmann.wurzbacher@rub.de}
\date{November 5, 2014}
\theoremstyle{definition}
\newtheorem{Definition}{Definition}[section]
\newtheorem{Theorem}[Definition]{Theorem}
\newtheorem{Corollary}[Definition]{Corollary}
\newtheorem{Lemma}[Definition]{Lemma}
\newtheorem{Remark}[Definition]{Remark}
\newtheorem{Proposition}[Definition]{Proposition}
\begin{document}

\begin{abstract}
Given a multisymplectic manifold $(M,\omega)$ and a Lie algebra $\frak{g}$
acting on it by infinitesimal symmetries, Fregier-Rogers-Zambon
define a homotopy (co-)moment as an $L_{\infty}$-algebra-homomorphism
from $\frak{g}$ to the observable algebra $L(M,\omega)$ associated to $(M,\omega)$, in analogy with and generalizing 
the notion of a co-moment map in symplectic geometry. We give a cohomo\-logical characterization 
of existence and unicity for homotopy co-moment maps and show its utility in multisymplectic 
geometry by applying it to special cases as exact multisymplectic manifolds and simple Lie groups and
by deriving from it existence results concerning partial co-moment maps, as {\it e.g.} covariant 
multimomentum maps and multi-moment maps.  

\end{abstract}

\setcounter{section}{-1}
\maketitle
\section{Introduction}

A manifold $M$ together with a non-degenerate, closed $(n{+}1)$-form $\omega$ is called
$n$-plectic or multisymplectic. Interest in the geometry of these manifolds stems mainly form the 
quest for a multiphase space
or Hamiltonian formulation for classical field theory generalizing Hamiltonian mechanics ({\it cf. e.g.} 
\cite{MR1244450}, \cite{forger-paufler-roemer}, \cite{helein-kouneiher} and references therein).
Unfortunately the Poisson brackets on functions on a symplectic manifold do not easily generalize
to the field-theoretic situation. On the one hand expected identities hold on multiphase spaces
only up to ``divergence terms'' and on the other hand ``symplectic structures'' on 
infinite dimensional mapping spaces, the phase spaces of classical field theories, are typically highly
delicate to work with (compare {\it e.g.}  \cite{barnich-shLiefieldtheory}, \cite{mm-clft2} and \cite{Helein1106} for a 
mathematical discussion of aspects of these issues).\\

A crucial insight by Baez-Hoffnung-Rogers and Rogers ({\it cf.} \cite{MR2566161} resp. \cite{MR2892558}),
extending the cited work of Barnich-Fulp-Lada-Stasheff (\cite{barnich-shLiefieldtheory}) on 
algebraic structures in classical field theory,
is that every $n$-plectic manifold comes with a Lie $n$-algebra, a special case of an $L_{\infty}$-algebra,
of observables. This result explains the presence of divergence terms and allows
to study classical field theories via finite dimensional geometry with a reasonable analogue of
the Poisson bracket on the space of observables. The next important step in the development of multisymplectic 
geometry is the definition of a homotopy (co-)moment map and the elucidation of its fundamental 
properties in the preprint \cite{fregier2013homotopy}.\\

Since moment maps play an immense role in symplectic geometry, one is summoned to ask for its
existence and unicity in the multisymplectic situation. In the symplectic case this is, of course, well-studied 
and easily formulated in cohomological terms (compare {\it e.g.}  \cite{weinstein}, pp. 20-21). Our lack of understanding of the 
cohomological description in Section 4.2 in \cite{MR1244450} led us to look for an obstruction theory for 
homotopy co-moment maps that is at the same time concise and implies the partial results scattered
in the literature.\\

Let us describe the content of this note in some more detail. The first section recalls the work of Rogers 
on the observable algebra of an $n$-plectic manifold $M$ (with the extension to the pre-$n$-plectic case
given by Zambon), whereas the second section defines -following \cite{fregier2013homotopy}- the notion
of a homotopy (co-)moment map for a given infinitesimal $n$-plectic action 
$\zeta:\mathfrak g\to \mathfrak X(M,\omega)$, where $\mathfrak g$ is a finite-dimensional Lie algebra 
and $\mathfrak X(M,\omega)$ is the space of vector fields whose flow preserve the $n$-plectic form $\omega$.
Section 3 first sets up the double-complex $\Lambda^\bullet\mathfrak g^*\otimes \Omega^\bullet(M) $
with $(\Lambda^\bullet\mathfrak g^*, \delta)$ the Eilenberg-MacLane complex and $(\Omega^\bullet(M),d) $
the de Rham-complex. We then consider a natural cocycle $g$ of the ensuing total complex, already
constructed in \cite{MR1618735} for different reasons, and show that for a given action $\zeta$,
homotopy co-moment maps are in bijection with potentials of $g$ (Lemma 3.3 and Theorem 3.5). The last section 
specializes the preceding result to natural cases as symplectic manifolds and exact multisymplectic manifolds,
 and recovers existence results for partial homotopy co-moments: universal momentum maps in the sense of 
 \cite{forger-paufler-roemer},  covariant (multi)momentum maps in the sense of   \cite{MR1244450} and
 \cite{mm-clft}, and multi-moment maps in the sense of  \cite{MR3079342}.\\
 
\noindent  {\bf N.B.} The main results of this note make part of the M.Sc.~thesis of the first author \cite{ryvkin-msc}.
 The obstruction-theory for existence and unicity of co-moments was independently developed in 
 \cite{fregier14cohomology}. The latter reference is a sequel to \cite{fregier2013homotopy}, where a
 cohomological framework is already announced, and both of these articles are oriented towards 
 geometric structures related to higher categories and equivariant cohomology, whereas our work is more 
 focussed on applications to classical 
 multisymplectic geometry.
 
\section{The observable algebras of n-plectic manifolds}

\begin{Definition}
Let $n$ be in $\mathbb N$. A \textit{pre-n-plectic} manifold $(M,\omega)$ is a manifold $M$ equiped with a closed $(n{+}1)$-form $\omega\in \Omega^{n+1}_{cl}(M)$. A pre-n-plectic manifold $(M,\omega)$ is called \textit{n-plectic} or \textit{multisymplectic}  if the map  $\omega_p^\flat:T_pM\to \Lambda^nT^*_pM$ defined by $\omega^\flat_p(v)=\iota_v\omega_p$ is injective for all $p\in M$. (The symbol $\iota_v$ denotes
the contraction with $v$.)
\end{Definition}

\begin{Definition}
Let $(M,\omega)$ be a pre-n-plectic manifold. The \textit{space of observables of $(M,\omega)$ }is defined as the graded vector space $L(M,\omega)=L=\bigoplus_{i=-n+1}^0L_i$, where $L_i:=\Omega^{n-1+i}(M)$ for $i\neq 0$ and 
\begin{align*}
L_0:=\left\{(v,\alpha)\in \mathfrak X(M)\times \Omega^{n-1}(M)~|~ d\alpha=-\omega^\flat(v)\right\}.
\end{align*}
The image of the projection $\pi_{\mathfrak X}:L_0\to \mathfrak X(M)$ is called \textit{the space of Hamiltonian vector fields} and denoted by $\mathfrak X_{Ham}(M,\omega)$.  
\end{Definition}

\begin{Remark}
The vector space $L_0$ is the pullback fitting into the following diagram:
\begin{align}
\begin{xy}
\xymatrix{
L_0 \ar[d]^{\pi_{\Omega}}\ar[r]^{\pi_{\mathfrak X}}&\mathfrak X(M)\ar[d]^{-\omega^\flat}\\
 \Omega^{n-1}(M) \ar[r]^d& \Omega^{n}(M) }
\end{xy} \label{prepullback}
\end{align} 
When $(M,\omega)$ is n-plectic the map $\omega^\flat$ is injective so that $L_0$ is canonically isomorphic to the subspace $\Omega_{Ham}^{n-1}(M,\omega):=\pi_{\Omega}(L_0)$ of $\Omega^{n-1}(M)$ called \textit{the space of Hamiltonian forms of $(M,\omega)$}. In this case one usually writes  $\Omega_{Ham}^{n-1}(M,\omega)$  instead of $L_0$ for the degree zero component of the space of observables.
\end{Remark}

\begin{Remark}
For $(v,\alpha)\in L_0$ we have $\mathcal L_v\omega=d\iota_v\omega+\iota_vd\omega=d(-d\alpha)+0=0$ so that $\mathfrak X_{Ham}(M,\omega)$ is a subset of $\mathfrak X(M,\omega):=\{v\in \mathfrak X(M)~|~\mathcal L_v\omega=0\}$.
\end{Remark}

\noindent Unlike in the symplectic ({\it i.e.} $n=1$) case the observables of a pre-n-plectic manifold do not form a Lie algebra for $n>1$. 
They carry the more general structure of an $L_\infty$-algebra.

\begin{Definition}$ $
\begin{enumerate}
\item[(a)] An \textit{$L_\infty$-algebra} (or \textit{Lie-$\infty$-algebra}) is a graded vector space $L= \bigoplus_{i\in \mathbb Z}L_i$ together with a family of graded skew-symmetric multilinear maps $\{l_k:\varprod^kL\to L~|~k\in \mathbb N\}$ such that $l_k$ has degree $2{-}k$ and the following identity holds
\begin{align}\label{linfy}
	\sum_{i+j=n+1} (-1)^{i(j+1)}\sum_{\sigma\in ush(i,n-i)}sgn(\sigma)\epsilon(\sigma;x_1,...,x_n)~l_j(l_i(x_{\sigma(1)},..., x_{\sigma(i)}), x_{\sigma(i+1)}...,x_{\sigma(n)})=0
\end{align}
for all $n\in\mathbb N$, where $\epsilon(\sigma;x_1,...,x_n)$ denotes the Koszul sign of $\sigma$ acting on the elements $x_1,...,x_n$ and $ush(i,n{-}i)\subset S_n$ denotes the space of all $(i,n{-}i)$-unshuffles.
\item[(b)]  Let $m\in \mathbb N$. An $L_\infty$-algebra $L$ is called \textit{Lie m-algebra} if its underlying graded vector space is concentrated in the degrees $\{i\in \mathbb Z~|~-m+1\leq i \leq 0\}$.
\item[(c)]  A Lie m-algebra $L$ is called \textit{grounded}, if  $ l_k(x_1,...,x_k)$ is zero whenever $k>1$ and 
$\sum_{i=1}^k|x_i|\neq 0$. (Here and in the sequel $|x|$ denotes the degree of a homogeneous element $x$ of a graded vector space.)
\end{enumerate}
\end{Definition}

\begin{Remark}
As most terms in the multi-bracket equation (\ref{linfy}) vanish, a grounded Lie m-algebra can be described as an $m$-term cochain complex $\left(\bigoplus_{i=-m+1}^0L_i~,~ l_1\right)$ with a family of linear maps $\{l_k:\Lambda^k L_0\to L_{2-k}~|~1<k\leq m+1\}$ satisfying $l_k(l_1(x_1),x_2,...,x_k)=0$ for all $x_1,..,x_k\in L_0$ and $\partial l_{k}=l_1 l_{k+1}$ for $2\leq k\leq m+1$, where $l_{m+2}$ is to be interpreted as the zero map and $\partial: \Lambda^p(L_0,L_k)\to $ $\Lambda^{p+1}(L_0,L_k)$ is defined as follows, generalizing the Chevalley-Eilenberg differential in Lie algebra cohomology:
\begin{align*}
(\partial f)(x_1,...,x_{p+1})=\sum_{1\leq i<j\leq p+1} (-1)^{i+j}f(l_2(x_i, x_j), x_1,...,\hat x_i,...,\hat x_j,...,x_{p+1}).
\end{align*}
The notation $\Lambda^r(L_0,L_i)=Hom(\Lambda^rL_0,L_i)$ stands of course for the vector space of r-multilinear skew-symmetric maps from $\varprod^rL_0$ to $L_i$. 
\end{Remark}

\begin{Proposition}
Let $(M,\omega)$ be pre-n-plectic. Then $L=L(M,\omega)$ carries the structure of a grounded Lie n-algebra given by the maps $\{l_k~|~1\leq k\leq n+1\}$ defined as follows:
\begin{itemize}
	\item $l_1(\alpha):=d\alpha$ for $\alpha \in L_{-i}, i>1$, $l_1(\alpha)=(0,d\alpha)$ for $\alpha\in L_{-1}$ and $l_1((v,\alpha))=0$ for $(v,\alpha)\in L_0$
	\item $l_2((v,\alpha),(w,\beta))=([v,w],\iota_w\iota_v\omega)$
\item for $k>2$ we set $l_k((v_1,\alpha_1),...,(v_k,\alpha_k)):=-(-1)^{k(k+1)/2}\iota_{v_k}...\iota_{v_1}\omega$.
\end{itemize}
We call $(L(M,\omega),\{ l_k\})$ the \textit{Lie n-algebra of observables} of $(M,\omega)$.
\end{Proposition}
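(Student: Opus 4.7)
The plan is to invoke Remark 1.5, which reduces the infinite tower of $L_\infty$-identities (\ref{linfy}) for a grounded Lie $n$-algebra to three finite and concrete conditions: (i) $(L_\bullet, l_1)$ is a cochain complex, (ii) each multibracket $l_k$ with $k \geq 2$ annihilates $l_1$-images in the first slot, and (iii) $\partial l_k = l_1 \circ l_{k+1}$ for $2 \leq k \leq n+1$, where by convention $l_{n+2} := 0$.

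Condition (i) is immediate: for $\alpha \in L_{-1} = \Omega^{n-2}(M)$ one has $d(d\alpha) = 0 = -\omega^\flat(0)$, so $l_1(\alpha) = (0, d\alpha)$ indeed lies in $L_0$, and $l_1 \circ l_1 = 0$ reduces to $d^2 = 0$ combined with the vanishing of $l_1$ on $L_0$. Condition (ii) is almost as direct: $l_1(\alpha) = (0, d\alpha)$ has trivial vector-field component, so $l_2(l_1(\alpha), (w, \beta)) = ([0, w], \iota_w \iota_0 \omega) = 0$, and for $k > 2$ we find $l_k(l_1(\alpha), x_2, \ldots, x_k) = -(-1)^{k(k+1)/2} \iota_{v_k} \cdots \iota_{v_2} \iota_0 \omega = 0$.

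The substantive content is (iii). For $k = 2$ the vector-field component of the identity $\partial l_2(x_1, x_2, x_3) = l_1 l_3(x_1, x_2, x_3)$ is the Jacobi identity for the bracket of vector fields, while the form component follows from Cartan's magic formula applied to $-d\iota_{v_3} \iota_{v_2} \iota_{v_1} \omega$, using $\mathcal L_v = d\iota_v + \iota_v d$, the closedness $d\omega = 0$, and the identity $[\mathcal L_v, \iota_w] = \iota_{[v,w]}$. For general $k \leq n + 1$, one expands $d(\iota_{v_{k+1}} \cdots \iota_{v_1} \omega)$ by repeatedly substituting $d\iota_{v_i} = \mathcal L_{v_i} - \iota_{v_i} d$, pushing $d$ to the right until it annihilates $\omega$; the remaining products of Lie derivatives and contractions are collapsed using $[\mathcal L_{v_i}, \iota_{v_j}] = \iota_{[v_i, v_j]}$, and the result matches the sum on the left-hand side of $\partial l_k(x_1, \ldots, x_{k+1})$ term by term. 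The boundary case $k = n+1$ becomes the vanishing of a combinatorial alternating sum that emerges once $d$ has been pushed through.

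The main obstacle is the sign bookkeeping: one must simultaneously track the Koszul signs from the unshuffle sum originally defining the $L_\infty$-relations (absorbed into $\partial$ via Remark 1.5), the explicit $-(-1)^{k(k+1)/2}$ factors appearing in the definition of the $l_k$, and the signs produced by each application of Cartan's formula. A clean organization, essentially that used by Rogers in the cited references, is to work throughout with the graded-commutation algebra generated by the antiderivations $\iota_v$ of degree $-1$, the derivations $\mathcal L_v$ of degree $0$, and the exterior differential $d$ on $\Omega^\bullet(M)$, in which the recurrence $d(\iota_{v_{k+1}} \alpha) = \mathcal L_{v_{k+1}} \alpha - \iota_{v_{k+1}} d\alpha$ iterates transparently and the overall sign of each resulting term is determined by its combinatorial type.
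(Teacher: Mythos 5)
Your outline is correct and takes the same route as the proofs the paper defers to --- the paper itself offers no argument beyond citing Fr\'egier--Rogers--Zambon (Theorem 4.7) and Rogers --- namely the reduction of the $L_\infty$-identities to the three conditions characterizing a grounded Lie $n$-algebra, followed by a Cartan-calculus verification. One box is left unchecked: you verify that $l_1$ maps $L_{-1}$ into $L_0$ but not that $l_2$ does, i.e.\ that $d(\iota_w\iota_v\omega)=-\iota_{[v,w]}\omega$; this uses $\mathcal L_v\omega=\mathcal L_w\omega=0$ for Hamiltonian $v,w$, and that same vanishing is what actually terminates the expansion you describe for condition (iii) once $d$ has been pushed through to $\omega$, so it deserves explicit mention alongside $d\omega=0$.
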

\begin{proof} A proof can be found in \cite{fregier2013homotopy} (Theorem 4.7), where the property of ``being grounded''  is referred to as ``having Property P''. In the n-plectic case the result follows immediately from \cite{MR2566161} for $n=2$ and from \cite{MR2892558} for $n>2$.
\end{proof}

\section{Lie algebras of n-plectic symmetries}
In this section we consider infinitesimal actions $\zeta:\mathfrak g\to \mathfrak X(M)$ of finite dimensional Lie algebras $\mathfrak g$ on pre-n-plectic manifolds $(M,\omega)$, preserving the form $\omega$, and define what is a homotopy co-moment
for such an action.

\begin{Definition}
An infinitesimal action $\zeta: \mathfrak g\to \mathfrak X(M)$ is called \textit{pre-n-plectic} or simply \textit{n-plectic} if the image of $\zeta$ is in $\mathfrak X(M,\omega)$. The action is called \textit{(weakly) Hamiltonian} if there exists a linear lift $j:\mathfrak g\to L=L(M,\omega)$ of $\zeta$ along $\pi_{\mathfrak X}$. Such a lift is called a \textit{(weak) co-moment map} for the action $\zeta$. The action is called \textit{strongly Hamiltonian} if there exists an $L_\infty$-morphism $F:\mathfrak g\to L$ lifting $\zeta$ along $\pi_{\mathfrak X}$. In the latter case the morphism $F$ is called a \textit{homotopy co-moment map} for $\zeta$. 
\end{Definition}

\begin{Remark}
A strongly Hamiltonionian action is always weakly Hamiltonian. A weakly Hamiltonian action is always n-plectic.
\end{Remark}

\begin{Remark}
If $(M,\omega)$ is a pre-symplectic manifold then the above definition reduces to the classical notion of (pre-)symplectic/Hamiltonian/strongly Hamiltonian actions and co-moment maps. 
\end{Remark}

\noindent For the convenience of the reader we explicitly recall the notion of $L_\infty$-morphisms:

\begin{Definition}\label{weakmorph}
An \textit{$L_\infty$-morphism} from $(L,l_i)$ to $(L',l_i')$ is a family $\{f_k\}$ of graded skew-symmetric maps of degrees $1{-}k$ satisfying the following condition for $n\geq 1$:

\begin{align*}
&\sum_{i+j=n+1} \sum_{\sigma\in ush(i,n-i)}(-1)^{i(j+1)}sgn(\sigma)\epsilon(\sigma;x_1,...,x_n)
 f_{j}\left(
\left( l_i(x_{\sigma(1)},..., x_{\sigma(i)})\right), x_{\sigma(i+1)},..., x_{\sigma(n)}
\right)\\
&=\sum_{p=1}^n
\sum_{\substack{\sum_{j=1}^p k_j=n\\ k_i\leq k_{i+1}}}
\sum_{\substack{\sigma\in ush(k_1,...,k_p)\\ \sigma(\sum_{i=1}^{j-1}k_i+1)<\sigma(\sum_{i=1}^{j}k_i+1)\\ \text{whenever } k_j=k_{j+1}}}
(-1)^\beta sgn(\sigma)\epsilon(\sigma;x_1,...,x_n)~~~~\times \\
&~~~~~~~~~~~~~~~~~~~ l_p'(
 f_{k_1}(x_{\sigma(1)},..., x_{\sigma(k_1)}) 
  ,  f_{k_2}(x_{\sigma(k_1+1)},..., x_{\sigma(k_1+k_2)}), ...,  f_{k_p}(x_{\sigma(n-k_p+1)},..., x_{\sigma(n)}) ),
\end{align*}
where $\beta$ is given by the following formula:
\begin{align*}
\beta=&\frac{p(p-1)}{2}+\sum_{i=1}^{p}k_i(p-i)+\\ &(k_p-1)\sum_{i=1}^{n-k_p}|x_{\sigma(i)}|+ (k_{p-1}-1)\sum_{i=1}^{n-(k_p+k_{p-1})}|x_{\sigma(i)}|+...+ (k_2-1)\sum_{i=1}^{n-(k_p+k_{p-1}+...+ k_2)}|x_{\sigma(i)}| .
\end{align*}
\end{Definition}

\begin{Remark}
An $L_\infty$-algebra can equivalently be described as the free symmetric coalgebra of the suspension of the graded vector space $L$ equipped with a coderivation that squares to zero. The above multi-bracket condition for morphisms then corresponds to the preservation of the coderivations. The equivalence of the two approaches is proven in \cite{MR1235010}. 
\end{Remark}

\begin{Proposition}
Let $\mathfrak g$ be a Lie algebra and $L$ a grounded Lie n-algebra. Then an $L_\infty$-morphism from $\mathfrak g$ to $L$ is a family of maps $\{f_1,...,f_n\}$ such that $f_k$ has degree $1{-}k$ and is satisfying $\delta f_k + l_1 f_{k+1}=$ $-f^*_1l_k$ for $1<k\leq n$, where $f_{n+1}$ is interpreted as the zero map and $\delta:\Lambda^k(\mathfrak g,L)\to$ $\Lambda^{k+1}(\mathfrak g,L)$ denotes the \textit{Chevalley-Eilenberg differential (with respect to the trivial representation)} defined as follows:
\begin{align*}
(\delta f)(X_1,...,X_{k+1})=\sum_{1\leq i<j\leq n} (-1)^{i+j}f([X_i, X_j], X_1,...,\hat X_i,...,\hat X_j,...,X_{k+1}).
\end{align*}
\end{Proposition}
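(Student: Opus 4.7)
The plan is to prove this by brute specialization of the general $L_\infty$-morphism identity from Definition \ref{weakmorph}, exploiting two structural constraints: first, the source $\mathfrak{g}$ is an ordinary Lie algebra, so when viewed as an $L_\infty$-algebra its only nonzero bracket is $l_2=[\cdot,\cdot]$; second, the target $L$ is a grounded Lie $n$-algebra, so any bracket $l_p$ with $p>1$ annihilates an input family of nonzero total degree. Combined with the degree constraint $f_k \colon \Lambda^k\mathfrak{g} \to L_{1-k}$ and the fact that $L_i=0$ for $i<-n+1$, the morphism data indeed reduces to finitely many components $f_1,\ldots,f_n$ as claimed.

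Next I would write down the morphism identity of Definition \ref{weakmorph} at level $m$ and simplify each side separately. On the left-hand side, because $l_i$ on $\mathfrak{g}$ vanishes for $i\neq 2$, the double sum collapses to the $i=2$, $j=m-1$ term, a sum over $(2,m-2)$-unshuffles of $f_{m-1}([X_{\sigma(1)},X_{\sigma(2)}],X_{\sigma(3)},\ldots,X_{\sigma(m)})$ carrying the Koszul sign of $\sigma$. A standard combinatorial identification recognizes this, up to an overall sign, as the Chevalley--Eilenberg coboundary $\delta f_{m-1}$ defined in the statement.

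For the right-hand side I would invoke groundedness. A partition $k_1+\cdots+k_p=m$ feeds into $l_p$ an input of total degree $\sum_j (1-k_j)=p-m$; for $p>1$ groundedness forces $p=m$, hence $k_1=\cdots=k_p=1$. Thus only two classes of terms survive: the $p=1$ term $l_1(f_m(X_1,\ldots,X_m))$, and the totally decomposed $p=m$ term $l_m(f_1(X_{\sigma(1)}),\ldots,f_1(X_{\sigma(m)}))$, which after summing over permutations reassembles (up to sign) into $f_1^*l_m$ evaluated on $(X_1,\ldots,X_m)$.

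Finally, assembling these simplifications yields a relation of the form $\delta f_{m-1}+l_1 f_m=\pm f_1^* l_m$, which after reindexing is the identity in the proposition, with the boundary conventions $f_{n+1}=0$ handling the top case. The routine but only real obstacle is precise sign bookkeeping: one must trace the Koszul sign together with the factor $\beta$ from Definition \ref{weakmorph} through the two surviving classes of partitions and reconcile the result with the sign convention built into the Chevalley--Eilenberg differential. Once this is done, the equivalence of the specialized morphism identity with the stated equations is immediate.
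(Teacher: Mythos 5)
Your proposal is correct and is essentially the argument the paper intends: the paper's own proof simply defers to Fr\'egier--Rogers--Zambon, Proposition 3.8, where exactly this specialization of the general morphism identity is carried out --- source concentrated in degree $0$ with only $l_2$ nonzero (collapsing the left side to the Chevalley--Eilenberg coboundary), target grounded so that on the right side the degree count $\sum_j(1-k_j)=p-m$ forces either $p=1$ or $k_1=\cdots=k_p=1$. One small point worth recording: your derivation yields $\delta f_k + l_1 f_{k+1} = -f_1^* l_{k+1}$, which is the identity actually used later in the paper (proof of Lemma~\ref{lem1}, where $g_{k+1}=f_1^*l_{k+1}$ is paired with $\delta p_k+dp_{k+1}=g_{k+1}$); the subscript $l_k$ on the right-hand side of the Proposition as stated appears to be a typo rather than a flaw in your reindexing.
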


\begin{proof}
The result is easily deduced from Definition \ref{weakmorph} and its proof is explicitly given in \cite{fregier2013homotopy}, Proposition 3.8.
\end{proof}

\section{The existence and unicity of co-moment maps}
To analyse the cohomological obstruction to the existence of homotopy co-moment maps, we study the double complex $\Lambda^\bullet\mathfrak g^*\otimes \Omega^\bullet(M) $ resulting from tensoring the cochain complexes $(\Lambda^\bullet\mathfrak g^*,~\delta )$, where $\delta$ denotes the Chevalley-Eilenberg differential, with $(\Omega^\bullet(M),d)$. Diagrammatically we consider thus:

\begin{xy}
\xymatrix{...&...&...&...&...\\
\Lambda^3\mathfrak g^*\otimes \Omega^{0}(M)\ar[r]^{id\otimes d}\ar[u]^{\delta\otimes id}&\Lambda^3\mathfrak g^*\otimes \Omega^{1}(M)\ar[r]^{id\otimes d}\ar[u]^{\delta\otimes id}&\Lambda^3\mathfrak g^*\otimes \Omega^{2}(M)\ar[r]^{id\otimes d}\ar[u]^{\delta\otimes id}&\Lambda^3\mathfrak g^*\otimes \Omega^{3}(M)\ar[r]\ar[u]^{\delta\otimes id}& ... \\
\Lambda^2\mathfrak g^*\otimes \Omega^{0}(M)\ar[r]^{id\otimes d}\ar[u]^{\delta\otimes id}&\Lambda^2\mathfrak g^*\otimes \Omega^{1}(M)\ar[r]^{id\otimes d}\ar[u]^{\delta\otimes id}&\Lambda^2\mathfrak g^*\otimes \Omega^{2}(M)\ar[r]^{id\otimes d}\ar[u]^{\delta\otimes id}&\Lambda^2\mathfrak g^*\otimes \Omega^{3}(M)\ar[r]\ar[u]^{\delta\otimes id}& ... \\
\Lambda^1\mathfrak g^*\otimes \Omega^{0}(M)\ar[r]^{id\otimes d}\ar[u]^{\delta\otimes id}&\Lambda^1\mathfrak g^*\otimes \Omega^{1}(M)\ar[r]^{id\otimes d}\ar[u]^{\delta\otimes id}&\Lambda^1\mathfrak g^*\otimes \Omega^{2}(M)\ar[r]^{id\otimes d}\ar[u]^{\delta\otimes id}&\Lambda^1\mathfrak g^*\otimes \Omega^{3}(M)\ar[r]\ar[u]^{\delta\otimes id}& ...\\
\Lambda^0\mathfrak g^*\otimes \Omega^{0}(M)\ar[r]^{id\otimes d}\ar[u]^{\delta\otimes id}&\Lambda^0\mathfrak g^*\otimes \Omega^{1}(M)\ar[r]^{id\otimes d}\ar[u]^{\delta\otimes id}&\Lambda^0\mathfrak g^*\otimes \Omega^{2}(M)\ar[r]^{id\otimes d}\ar[u]^{\delta\otimes id}&\Lambda^0\mathfrak g^*\otimes \Omega^{3}(M)\ar[r]\ar[u]^{\delta\otimes id}& ... }
\end{xy}$ $\newline

\noindent We turn $\Lambda^\bullet\mathfrak g^*\otimes \Omega^\bullet(M)$ into a singly graded cochain complex $(C^\bullet, D)$ by defining the total degree of an element as the sum of the individual degrees, {\it i.e.} we set $C^k=\bigoplus_{i+j=k} \Lambda^i\mathfrak g^*\otimes \Omega^j(M)$. To assure that $D^2=0$ we have to alter the sign of one of the differentials. We stick to the following convention:
\begin{align*}
D|_{ \Lambda^i\mathfrak g^*\otimes \Omega^j(M) }=(\delta\otimes id) +(-1)^n(-1)^{i+j}(id\otimes d).
\end{align*}
As usually, we often abusively write $\delta(\alpha_i\otimes \eta_j)$ for   $\delta\alpha_i\otimes \eta_j$ and  $d(\alpha_i\otimes \eta_j)$ for  $\alpha_i\otimes d\eta_j$ in the sequel.\newline

\noindent We will now analyse an action $\zeta:\mathfrak g\to \mathfrak X(M,\omega)$ in terms of this complex. We define the maps $g_k\in\Lambda^k(\mathfrak g,\Omega^{n+1-k}(M)) = \Lambda^k\mathfrak g^*\otimes \Omega^{n+1-k}(M)$ for $1\leq k\leq n+1$ by 
\begin{align*}
g_k(X_1,...,X_k):=-(-1)^{k(k+1)/2}\iota_{\zeta(X_k)}...\iota_{\zeta(X_1)}\omega.
\end{align*}
\begin{Lemma}
The sum of the above defined classes $g=\sum_{i=1}^{n+1}g_i$ is a $D$-cocycle {\it i.e.} $D(g)=0$.
\end{Lemma}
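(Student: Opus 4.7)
The plan is to verify $D(g)=0$ bidegree by bidegree in the double complex. Because $g_k$ lies in bidegree $(k,n{+}1{-}k)$, that is on the anti-diagonal $i+j=n+1$, the sign in the definition of $D$ evaluates to $(-1)^n(-1)^{n+1}=-1$, so $D(g_k)=\delta g_k - dg_k$. Collecting terms, the $(p,n{+}2{-}p)$-component of $D(g)$ reads $-dg_1$ when $p=1$, reads $\delta g_{p-1}-dg_p$ when $2\le p\le n+1$, and reads $\delta g_{n+1}$ when $p=n+2$. I would therefore verify three equations separately: $dg_1=0$, $\delta g_{p-1}=dg_p$ for every $p$ in the middle range, and $\delta g_{n+1}=0$.

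The first equation is immediate from Cartan's magic formula: $dg_1(X)=d\iota_{\zeta(X)}\omega=\mathcal L_{\zeta(X)}\omega-\iota_{\zeta(X)}d\omega=0$, using that $\zeta(X)\in\mathfrak X(M,\omega)$ and $d\omega=0$. The middle identities rest on an iterated Cartan formula, which I would prove by induction on $k$ using $[d,\iota_v]=\mathcal L_v$ and $[\mathcal L_v,\iota_w]=\iota_{[v,w]}$: for any $v_1,\dots,v_k \in \mathfrak X(M,\omega)$,
\begin{equation*}
  d(\iota_{v_k}\cdots\iota_{v_1}\omega) \;=\; (-1)^k\sum_{1\le i<j\le k}(-1)^{i+j}\,\iota_{v_k}\cdots\widehat{\iota_{v_j}}\cdots\widehat{\iota_{v_i}}\cdots\iota_{v_1}\,\iota_{[v_i,v_j]}\omega.
\end{equation*}
Setting $v_i=\zeta(X_i)$ and using that $\zeta$ is a Lie homomorphism, so $[\zeta(X_i),\zeta(X_j)]=\zeta([X_i,X_j])$, this matches termwise the Chevalley-Eilenberg expansion of $\delta g_{p-1}(X_1,\dots,X_p)$ from Proposition 2.5, after a short sign check: the prefactors $(-1)^{p(p+1)/2}\cdot(-1)^p$ from $dg_p$ and $(-1)^{p(p-1)/2}$ from $g_{p-1}$ differ by $2p$, which is even.

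The top equation $\delta g_{n+1}=0$ requires a slightly different argument, since $g_{n+1}(X_1,\dots,X_{n+1})$ is already a function, namely $\pm\omega(\zeta(X_1),\dots,\zeta(X_{n+1}))$, and no $d$-term is available to cancel $\delta g_{n+1}$. The claim reduces to showing
\begin{equation*}
  T:=\sum_{1\le i<j\le n+2}(-1)^{i+j}\,\omega\bigl([v_i,v_j],\,v_1,\dots,\widehat v_i,\dots,\widehat v_j,\dots,v_{n+2}\bigr)=0
\end{equation*}
with $v_\ell=\zeta(X_\ell)$. Expanding $d\omega(v_1,\dots,v_{n+2})=0$ and then replacing every derivative term $v_i\omega(v_1,\dots,\widehat v_i,\dots,v_{n+2})$ by $\sum_{k\ne i}\omega(\dots,[v_i,v_k],\dots)$ via $\mathcal L_{v_i}\omega=0$ rewrites $-T$ as a double sum over ordered pairs $(i,k)$; pairing the two orderings of each unordered pair $\{a,b\}$ and using skew-symmetry of $\omega$ to move the bracket into the first slot yields $-2T$, and hence $T=0$. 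The main obstacle throughout is the careful bookkeeping of signs---the $(-1)^{k(k+1)/2}$ in the definition of $g_k$, the $(-1)^{i+j}$ coming from $\delta$, and the Koszul signs appearing when reordering contractions all have to be reconciled---but once a consistent convention is fixed, every step reduces to elementary Cartan calculus.
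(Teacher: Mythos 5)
Your proof is correct and follows essentially the same route as the paper: reduce $D(g)=0$ bidegree-wise to the identities $\delta g_k = dg_{k+1}$ (with $g_0=g_{n+2}=0$) and derive these from $\mathcal L_{\zeta(X)}\omega=0$ via Cartan calculus. The only difference is that you work out in detail (iterated Cartan formula, sign checks, and the top-degree identity $\delta g_{n+1}=0$) what the paper delegates to Lemma 9.3 of Fr\'egier--Rogers--Zambon; your sign bookkeeping checks out.
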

\begin{proof}
Obviously $D(g)=0$ is equivalent to $\delta g_k=dg_{k+1}$ for $0\leq k\leq n+1$, where $g_0$ and $g_{n+2}$ are interpreted as the zero maps. The latter equalities follow easily from $\mathcal L_{\zeta(X)}\omega=0$ for all $X\in\mathfrak g$ (cf. Lemma 9.3. of \cite{fregier2013homotopy}).
\end{proof}

\begin{Remark}
Up to signs the cocycle $g$ corresponds to the Cartan cocycle $\hat \omega$ constructed for different purposes in Chapter 2, §4 of  \cite{MR1618735}.
\end{Remark}

\begin{Lemma}\label{lem1}
There is a one-to-one correspondance between homotopy co-moment maps and $D$-potentials $p=\sum_{i=1}^{n} p_i\in C^n$ with $p_i \in \Lambda^i\mathfrak g^*\otimes \Omega^{n-i}(M)$ of $g$.
\end{Lemma}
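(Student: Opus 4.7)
The plan is to unpack $Dp=g$ bidegree by bidegree in $\Lambda^\bullet\mathfrak{g}^*\otimes\Omega^\bullet(M)$ and match each component against one of the defining conditions of a homotopy co-moment map.

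To set up the bijection, given $\{f_1,\ldots,f_n\}$ I put $p_1:=\pi_\Omega\circ f_1\in\Lambda^1\mathfrak{g}^*\otimes\Omega^{n-1}(M)$ and $p_k:=f_k\in\Lambda^k\mathfrak{g}^*\otimes\Omega^{n-k}(M)$ for $2\leq k\leq n$, and assemble $p:=\sum_{k=1}^{n}p_k\in C^n$. Conversely, from a potential $p=\sum_k p_k$ of $g$ I reconstruct $f_k:=p_k$ for $k\geq 2$ and $f_1(X):=(\zeta(X),p_1(X))$; the fact that $f_1(X)$ lies in $L_0$ is not built into the formula but will be forced by one component of $Dp=g$.

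Since each $p_k$ has total bidegree $n$, the twist $(-1)^n(-1)^{i+j}$ in $D$ equals $+1$, so $Dp=\sum_k(\delta p_k+dp_k)$; sorting by bidegree, $Dp=g$ is equivalent to the system
\[
\delta p_{m-1}+d p_m = g_m,\qquad 1\leq m\leq n+1,
\]
in $\Lambda^m\mathfrak{g}^*\otimes\Omega^{n+1-m}(M)$, with the convention $p_0=p_{n+1}=0$. The $m=1$ component $dp_1=g_1$ reads, modulo the sign convention in the definition of $g_1$, as $dp_1(X)=-\omega^\flat(\zeta(X))$, which is precisely the constraint placing $(\zeta(X),p_1(X))$ in $L_0$. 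For $2\leq m\leq n+1$, I match the $m$-th equation with the $m$-input $L_\infty$-morphism axiom of Proposition 2.5. The key observations are: groundedness of $L(M,\omega)$ (Proposition 1.7) annihilates every composite $l_p(f_{k_1},\ldots,f_{k_p})$ on the right of the axiom except $l_1 f_m$ and $l_m(f_1,\ldots,f_1)$; $l_1$ acts as the de Rham differential on $L_{-k}$ for $k\geq 1$, so the first surviving term contributes $dp_m$; and direct evaluation of the bracket formula from Proposition 1.7 yields $l_m(f_1(X_1),\ldots,f_1(X_m))_\Omega=g_m(X_1,\ldots,X_m)$, with the $\pi_\mathfrak{X}$-part (only present for $m=2$) giving the automatic identity $\zeta([X,Y])=[\zeta(X),\zeta(Y)]$. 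The left-hand side of the axiom is the Chevalley-Eilenberg coboundary $\delta p_{m-1}$, so the axiom reads exactly $\delta p_{m-1}+dp_m=g_m$ as required.

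The main obstacle is uniform sign-tracking across the three sign conventions at play: the twist in $D$, the alternating factor $-(-1)^{k(k+1)/2}$ in the definitions of both $l_k$ and $g_k$, and the signs of the $L_\infty$-morphism axiom. They conspire to give the clean correspondence above, but verifying the conspiracy requires substantially more bookkeeping than conceptual content; the argument of the previous paragraph supplies the conceptual content.
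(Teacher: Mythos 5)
Your overall strategy is exactly the paper's: expand $Dp=g$ into its bidegree components $\delta p_{m-1}+dp_m=g_m$ (the sign twist in $D$ indeed disappears on total degree $n$) and identify these, via groundedness of $L(M,\omega)$ and the characterization of $L_\infty$-morphisms out of a Lie algebra, with the morphism relations $\delta f_{m-1}+l_1 f_m=-f_1^*l_m$ together with the lifting condition on $f_1$. The structural observations are all correct: only $l_1f_m$ and $l_m(f_1,\dots,f_1)$ survive on the right-hand side of the axiom, $f_1^*l_m=g_m$, and the $\mathfrak X$-component for $m=2$ is automatic because $\zeta$ is a Lie algebra morphism.

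However, the dictionary $p_1=\pi_\Omega\circ f_1$, $p_k=f_k$ is off by a global sign, and this is not a harmless convention: with it, $p$ satisfies $Dp=-g$, not $Dp=g$. Concretely, $g_1(X)=-(-1)^{1\cdot 2/2}\,\iota_{\zeta(X)}\omega=+\,\omega^\flat(\zeta(X))$, so the $m=1$ component of $Dp=g$ forces $d(p_1(X))=+\,\omega^\flat(\zeta(X))$, whereas membership of $(\zeta(X),p_1(X))$ in $L_0$ requires $d(p_1(X))=-\,\omega^\flat(\zeta(X))$; your assertion that $dp_1=g_1$ ``reads as $dp_1(X)=-\omega^\flat(\zeta(X))$'' has the sign backwards, so the $f_1$ you reconstruct from a potential of $g$ does not land in $L_0$. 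Likewise, for $m\geq 2$ the axiom gives $\delta f_{m-1}+df_m=-f_1^*l_m=-g_m$, hence $\delta p_{m-1}+dp_m=-g_m$ with your conventions. The paper's dictionary is $p_1=-\pi_\Omega\circ f_1$ and $p_k=-f_k$, which repairs both directions simultaneously. Since you explicitly defer the sign bookkeeping and the one sign you do assert is the wrong one, this is the (small but real) gap: as written you establish a bijection with $D$-potentials of $-g$; precomposing with $p\mapsto -p$ yields the stated lemma.
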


\begin{Remark}
We do not have a $p_0$ term as $\delta|_{\Lambda^0\mathfrak g^*}=0$ and $dp_0$ would be in $\Lambda^0\mathfrak g^*\otimes \Omega^{n+1}(M)$. Since $\mathfrak g$ has no component in this bidegree, there is no need for a $p_0$.
\end{Remark}

\begin{proof}\textit{\textcolor{black}{of Lemma \ref{lem1}}}. Let $F=\{f_k~|~1\leq k\leq n\}$ be a homotopy co-moment map. Then setting $p_1:= -\pi_{\Omega}\circ f_1$ and $p_k:=-f_k$ for $k\geq 2$ we claim that $p=\sum p_k $ is a potential for $g$. As only two components of $D(p)$ have non-vanishing projections on $\Lambda^k(\mathfrak g,\Omega^{n+1-k}(M))$ it suffices to show that $\delta p_k+dp_{k+1}=g_{k+1}$. Since the $L_\infty$-morphism $F$ lifts $\zeta$ along $\pi_{\mathfrak X}$ we have $\pi_{\mathfrak X}f_1(X)=\zeta(X)$, which in turn implies $df_1(X)=-\iota_{\zeta(X)}\omega$ i.e $dp_1=g_1$. Further we observe that $g_{k+1}(X_1,...,X_{k+1})=f^*_1l_{k+1}(X_1,...,X_{k+1})$, which directly implies $\delta p_k+dp_{k+1}=g_{k+1}$ for $k>0$.\newline\newline
Let now $p=p_1+...+p_n$ with $p_k\in\Lambda^k\mathfrak g^*\otimes \Omega^{n-k}(M)$ be a potential of $g$. We define $f_k:=-p_k$ for $k>1$ and $f_1(X)=(-p_1(X),\zeta(X))$. As $-d\pi_{\Omega} f_1(X)=dp_1(X)=g_1(X)=\iota_{\zeta(X)}\omega$  the map $f_1$ is well-defined and we can express $g_k$ by $f_1^*l_k$ for $k>1$. The higher identities directly follow from $\delta p_k+dp_{k+1}=g_{k+1}=f_1^*l_{k+1}$.
\end{proof}

\noindent The following theorem clarifies the existence and unicity question for homotopy co-moment maps.
\begin{Theorem}\label{mainthm}
Let $\zeta:\mathfrak g\to \mathfrak X(M,\omega)$ be an infinitesimal n-plectic action. The action is strongly Hamiltonian if and only if the above-defined cocycle $g$ vanishes in the cohomology of $C^\bullet$, {\it i.e.} $[g]=0$ as an element of $H^{n+1}(C^\bullet)$. If a homotopy co-moment map exists it is unique up to $D$-cocycles of total degree $n$ with vanishing $\Lambda^0\mathfrak g^*\otimes \Omega^{n}(M)$-component.
\end{Theorem}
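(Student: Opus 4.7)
The plan is to leverage Lemma \ref{lem1}, which furnishes a bijection between homotopy co-moment maps for $\zeta$ and those $D$-potentials $p\in C^n$ of $g$ whose $\Lambda^0\mathfrak g^*\otimes\Omega^n(M)$-component vanishes. With that bijection in hand, the existence claim becomes a comparison between ``$g$ admits a $D$-potential somewhere in $C^n$'' and ``$g$ admits a $D$-potential supported on $\bigoplus_{i=1}^n\Lambda^i\mathfrak g^*\otimes\Omega^{n-i}(M)$'', and the uniqueness claim reduces to a direct cocycle computation.

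The ``only if'' direction of existence is essentially immediate: given a homotopy co-moment map $F$, Lemma \ref{lem1} produces a potential $p$ of $g$, so $g$ is $D$-exact and $[g]=0$ in $H^{n+1}(C^\bullet)$. For the ``if'' direction, I plan to show that any potential can be normalized to one that fits the bijection. Suppose $\tilde p\in C^n$ satisfies $D\tilde p=g$ and decompose it along the bidegree splitting of $C^n$ as $\tilde p=\tilde p_0+p_1+\dots+p_n$ with $\tilde p_0\in\Lambda^0\mathfrak g^*\otimes\Omega^n(M)$. Since $g$ has no component in bidegree $(0,n{+}1)$, projecting $D\tilde p=g$ onto $\Lambda^0\mathfrak g^*\otimes\Omega^{n+1}(M)$ and using $\delta|_{\Lambda^0\mathfrak g^*}=0$ should force $d\tilde p_0=0$. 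Then $p:=p_1+\dots+p_n$ still satisfies $Dp=g$ in every remaining bidegree and lies in the admissible subspace, so Lemma \ref{lem1} yields the desired homotopy co-moment map.

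For uniqueness, two homotopy co-moment maps $F,F'$ correspond via Lemma \ref{lem1} to potentials $p,p'$ of $g$, each vanishing in bidegree $(0,n)$. Their difference $c:=p-p'$ satisfies $Dc=g-g=0$, has total degree $n$, and has vanishing $\Lambda^0\mathfrak g^*\otimes\Omega^n(M)$-component. Conversely, adding any such $D$-cocycle $c$ to $p$ produces another admissible potential, hence another homotopy co-moment map. This recovers exactly the claimed affine freedom.

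The only mildly delicate point is the normalization step in the existence half, where one must justify discarding $\tilde p_0$; this rests solely on $g$ having no $(0,n{+}1)$-component combined with $\delta|_{\Lambda^0\mathfrak g^*}=0$, as indicated in the remark preceding Lemma \ref{lem1}. Everything else is book-keeping within the total complex $(C^\bullet,D)$.
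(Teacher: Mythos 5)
Your proposal is correct and follows essentially the same route as the paper, whose proof simply invokes Lemma \ref{lem1}; the normalization step you spell out (discarding $\tilde p_0$ because $g$ has no $(0,n{+}1)$-component and $\delta|_{\Lambda^0\mathfrak g^*}=0$, so $D\tilde p_0=0$) is exactly the content of the Remark preceding that lemma. The uniqueness bookkeeping via differences of admissible potentials is likewise the intended argument.
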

\begin{proof}
The statement of the theorem follows directly from Lemma \ref{lem1}.
\end{proof}

\noindent The element $g_1$ satisfies $dg_1=0$. Thus it is a $d$-cocycle and we can regard the cohomology class $[g_1]_{dR}\in \Lambda^1\mathfrak g^*\otimes H^n_{dR}(M)=Hom(\mathfrak g, H^n_{dR}(M))$. If this element is zero there exists a potential $(-j)$ in $Hom(\mathfrak g, \Omega^{n-1}(M))$. As $d(j(X))=-g_1(X)=-\iota_{\zeta(X)}\omega$ we can conclude that $j$ is a linear lift of $\zeta$ along $\pi_{\mathfrak X}$. This leads us to 
 
\begin{Corollary}\label{g1cor}
Let $\zeta:\mathfrak g\to \mathfrak X(M,\omega)$ be an infinitesimal pre-n-plectic action. The action is weakly Hamiltonian if and only if $[g_1]_{dR}=0$ in $ \mathfrak g^*\otimes H^{n}_{dR}(M)$.
\end{Corollary}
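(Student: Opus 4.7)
My plan is to directly unpack the definitions and apply, in a degenerate form, the same strategy as in the proof of Lemma \ref{lem1}, but restricted to the lowest piece of $g$ only. Since the weakly Hamiltonian condition involves only the existence of a linear lift $j:\mathfrak g\to L_0$ and not the higher brackets, I expect to work entirely inside the row $\Lambda^1\mathfrak g^*\otimes \Omega^\bullet(M) = \mathrm{Hom}(\mathfrak g,\Omega^\bullet(M))$ and to never invoke the rest of the double complex $C^\bullet$.

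First I would observe that $g_1\in \Lambda^1\mathfrak g^*\otimes \Omega^n(M)$ is $d$-closed in the obvious sense: for every $X\in\mathfrak g$,
\begin{equation*}
d(g_1(X)) \;=\; d\iota_{\zeta(X)}\omega \;=\; \mathcal L_{\zeta(X)}\omega - \iota_{\zeta(X)}d\omega \;=\; 0,
\end{equation*}
using the pre-$n$-plectic assumption $\mathcal L_{\zeta(X)}\omega=0$ together with $d\omega=0$. Hence $g_1$ defines a class $[g_1]_{dR}\in \mathfrak g^*\otimes H^n_{dR}(M)$, and a primitive of $g_1$ in $\mathrm{Hom}(\mathfrak g,\Omega^{n-1}(M))$ exists precisely when $[g_1]_{dR}=0$.

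For the forward implication I would assume that $\zeta$ is weakly Hamiltonian, fix a linear lift $j:\mathfrak g\to L_0$, and write $j(X)=(\zeta(X),\alpha(X))$ with $\alpha:\mathfrak g\to\Omega^{n-1}(M)$ linear. The defining condition of $L_0$ then reads $d\alpha(X)=-\iota_{\zeta(X)}\omega=-g_1(X)$, so $-\alpha\in\mathrm{Hom}(\mathfrak g,\Omega^{n-1}(M))$ is a primitive of $g_1$ and $[g_1]_{dR}=0$. For the converse, if $[g_1]_{dR}=0$ there is a linear $q:\mathfrak g\to\Omega^{n-1}(M)$ with $dq=g_1$; setting $\alpha:=-q$ yields $d\alpha(X)=-\iota_{\zeta(X)}\omega$, so that $j(X):=(\zeta(X),\alpha(X))\in L_0$ depends linearly on $X$ and is a weak co-moment map lifting $\zeta$ along $\pi_{\mathfrak X}$.

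There is no real obstacle here: the corollary is essentially the cohomological restatement of the very definition of a weak co-moment map, isolated at the level of the single row $\Lambda^1\mathfrak g^*\otimes\Omega^\bullet(M)$. The only mild subtlety is that one must be able to choose the primitive $q$ linearly in $\mathfrak g$, but this is automatic since we work throughout in $\Lambda^1\mathfrak g^*\otimes \Omega^\bullet(M)=\mathrm{Hom}(\mathfrak g,\Omega^\bullet(M))$, and the vanishing of the class $[g_1]_{dR}$ is precisely exactness in this row.
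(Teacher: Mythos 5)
Your proof is correct and follows essentially the same route as the paper: observe that $g_1$ is $d$-closed via Cartan's formula and the invariance $\mathcal L_{\zeta(X)}\omega=0$, and then identify linear lifts $j:\mathfrak g\to L_0$ of $\zeta$ with (sign-adjusted) primitives of $g_1$ in $\mathrm{Hom}(\mathfrak g,\Omega^{n-1}(M))$. The only difference is that you spell out both implications and the linearity of the primitive explicitly, whereas the paper records only the "if" direction and leaves the rest implicit.
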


\noindent This corollary can be also interpreted in terms of the following exact sequence of $L_\infty$-algebras:

\begin{align}\label{d1}
\begin{xy}
\xymatrix{
0\ar[r]&\bigoplus_{i=0}^{n-2}\Omega^i(M)\oplus \Omega_{cl}^{n-1}(M)\ar[r]&L(M,\omega)\ar[r]^{\pi_\mathfrak X\circ \pi_{L_0}}&\mathfrak X(M,\omega)\ar[r]^{\gamma}_{v\mapsto [\iota_v\omega]}&H_{dR}^{n}(M)&\\
&&& \mathfrak g\ar[u]^\zeta\ar@{-->}[ul]^j&& }
\end{xy}
\end{align}
where $\Omega^{n-1}_{cl}(M)$ denotes the space of closed $(n{-}1)$-forms on $M$, the only nontrivial bracket of the leftmost space is the unary bracket $d$ and the rightmost term is to be interpreted as an abelian Lie algebra. The map $\zeta$ can be lifted to a linear map $j$ if and only if $([g_1]_{dR}=)~\gamma\zeta =0$. \newline\newline
\noindent By the Kuenneth theorem (cf., eg., \cite{MR1269324}, Thm 3.6.3) the total cohomology group $H^{n+1}(C^\bullet) $ is isomorphic to the direct sum $\bigoplus_{i+j=n+1}H^i(\mathfrak g)\otimes H^j_{dR}(M)$. Thus $[g]$ can be decomposed into classes $h_i\in H^{i}(\mathfrak g)\otimes H^{n+1-i}_{dR}(M)$ beyond $h_0=0$. These classes correspond to the $g_i$, but in a slightly subtle way, as the latter are in general neither $\delta$-closed nor $d$-closed. In order to interpret the $g_i$ as cocycles representing the $h_i$ both equivalence relations (de Rham and Chevalley-Eilenberg) have to be divided out at the same time, as $dg_i\in Im(\delta)$ and $\delta g_i\in Im(d)$.\newline\newline %
 In order to formalize this, we introduce the quotient maps $q_\Lambda:(\Lambda^k{\mathfrak g^*})\to \frac{\Lambda^k{\mathfrak g^*}}{(\Lambda^k{\mathfrak g^*})_{ex}}$ and $q_{\Omega}:\Omega^{l}(M)\to$ $\frac{\Omega^{l}(M)}{\Omega_{ex}^{l}(M)}$, where for a cochain complex $N$ we denote by $N^k_{ex}\subset N^k$ the subspace of exact elements. As $\delta$ resp. $d$ are zero on $(\Lambda^k{\mathfrak g^*)_{ex}}$ resp. $\Omega_{ex}^{l}(M)$ they induce maps $\bar \delta$ resp. $\bar d$ rendering the following sequences exact:  
\begin{align*}
\xymatrix{0\ar[r]& H^k(\mathfrak g)= \frac{(\Lambda^k{\mathfrak g^*})_{cl}}{(\Lambda^k{\mathfrak g^*})_{ex}}  \ar[r]^{~~~\subset}&  \frac{(\Lambda^k{\mathfrak g^*})}{(\Lambda^k{\mathfrak g^*})_{ex}} \ar[r]^{\bar \delta}&(\Lambda^{k+1}{\mathfrak g^*})_{ex}\ar[r]&0}
\end{align*}
resp.
\begin{align*}
\xymatrix{0\ar[r]& H_{dR}^l(M)= \frac{\Omega_{cl}^{l}(M)}{\Omega_{ex}^{l}(M)}  \ar[r]^{~~~\subset}&  \frac{\Omega^{l}(M)}{\Omega_{ex}^{l}(M)} \ar[r]^{\bar d}&\Omega_{ex}^{l+1}(M)\ar[r]&0~, }
\end{align*}
where $\subset$ denotes here the canonical inclusion maps. By the exactness of the preceeding two diagrams an element $a\in  \frac{(\Lambda^k{\mathfrak g^*})}{(\Lambda^k{\mathfrak g^*})_{ex}} \otimes \frac{\Omega^{l}(M)}{\Omega_{ex}^{l}(M)} $ satisfying $(\bar \delta\otimes id)a=0$ and $(id\otimes \bar d)a=0$ already fulfills $a\in H^k(\mathfrak g)\otimes H^l_{dR}(M)$. For $k\geq 2$,   $(q_\Lambda\otimes q_\Omega)(g_k)$ satisfies these equations and consequently we can regard $(q_\Lambda\otimes q_\Omega)(g_k)=h_k$ as an element of $H^k(\mathfrak g)\otimes H^{n+1-k}_{dR}(M)$.

\section{Applications and special cases}
\subsection{Co-moment maps in symplectic geometry}
When $\omega$ is a 2-form we recover the following classical results:
\newline
\newline
Let $(M,\omega)$ be a symplectic manifold and $\zeta:\mathfrak g\to \mathfrak X(M,\omega)$ an infinitesimal symplectic group action. The action is weakly Hamiltonian ({\it i.e.} there exists a linear lift $j:\mathfrak g\to C^\infty(M)$ of $\zeta$ along  $v_\bullet=-(\omega^\flat)^{-1}(d\bullet): C^\infty(M)\to \mathfrak X(M,\omega)$) if and only if $h_1=(q_\Lambda\otimes q_\Omega)(g_1)$ vanishes, {\it i.e.} if the image of $\zeta$ is a subset of the kernel of $\gamma = [\iota_{\bullet}\omega]_{dR}:\mathfrak X(M,\omega)\to H^1_{dR}(M)$. Moreover, as $\omega^\flat$ is invertible, $\gamma$ is surjective and in this case Diagram (\ref{d1}) can be extended to the following exact sequence of Lie algebras:

\begin{align*}
\begin{xy}
\xymatrix{
0\ar[r]& C^\infty_{cl}(M)\ar[r]&C^\infty(M)\ar[r]^{v_\bullet }&\mathfrak X(M,\omega)\ar[r]^{\gamma}_{x\mapsto [\iota_x\omega]}&H_{dR}^{1}(M)\ar[r]&0\\
&&& \mathfrak g\ar[u]^\zeta\ar@{-->}[ul]^j& &}
\end{xy}
\end{align*}
Furthermore, if $h_1=0$ and $j$ is a map as above, the class $h_2$ can be represented by the cocycle $\bar j=((X,Y)\mapsto \iota_{\zeta(Y)}\iota_{\zeta(X)}\omega-j([X,Y]))\in (\Lambda^2\mathfrak g^*)_{cl}\otimes C^{\infty}_{cl}(M)$. Hence $[\bar j]_{CE}=h_2\in H^2(\mathfrak g)\otimes H^0_{dR}(M)$ is the obstruction for the weakly Hamiltonian action defined by $\zeta$ and $j$ to be strongly Hamiltonian.
\subsection{Exact multisymplectic manifolds}
\begin{Lemma}\label{exactlemma}
Let $\zeta:\mathfrak g\to \mathfrak X(M,\omega)$ be an infinitesimal pre-n-plectic action. If $\omega$ has a $\mathfrak g$-invariant potential $\eta$ then the action is strongly Hamiltonian and the $L_\infty$-morphism takes the form $f_1(X)=(\zeta(X),\iota_{\zeta(X)}\eta)$ and for $k\geq 2$, $f_k(X_1,...,X_k)=(-1)^{k}(-1)^{\frac{k(k+1)}{2}}\iota_{\zeta(X_k)}...\iota_{\zeta(X_1)} \eta$.
\end{Lemma}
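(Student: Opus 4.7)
The plan is to invoke Lemma \ref{lem1}: by that result it suffices to exhibit an explicit $D$-potential $p=\sum_{k=1}^n p_k$ of $g$, after which the components of the desired homotopy co-moment are recovered from the correspondence established in the proof of Lemma \ref{lem1}. Guided by the formulas stated in the lemma I would set
$$p_k(X_1,\ldots,X_k) := (-1)^{k+1}(-1)^{k(k+1)/2}\,\iota_{\zeta(X_k)}\cdots\iota_{\zeta(X_1)}\eta, \qquad 1\le k\le n,$$
so that $-p_1$ coincides with the projected second slot of the desired $f_1$ and $-p_k$ reproduces the claimed formula for $f_k$ when $k\ge 2$, and then directly verify $\delta p_k+dp_{k+1}=g_{k+1}$ for $0\le k\le n$ (with $p_0$ and $p_{n+1}$ interpreted as zero).

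The tools are Cartan's magic formula $\mathcal L_v=d\iota_v+\iota_v d$, the graded commutator $[\mathcal L_v,\iota_w]=\iota_{[v,w]}$, the morphism property of $\zeta$, and the two hypotheses $\omega=d\eta$ and $\mathcal L_{\zeta(X)}\eta=0$. The base case $k=0$ is a direct application of the first three ingredients: $dp_1(X)=-d\iota_{\zeta(X)}\eta=-\mathcal L_{\zeta(X)}\eta+\iota_{\zeta(X)}d\eta=\iota_{\zeta(X)}\omega=g_1(X)$. For $k\ge 1$ I would expand $dp_{k+1}(X_1,\ldots,X_{k+1})$ by pushing $d$ successively past each $\iota_{\zeta(X_i)}$ via Cartan's formula. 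Invariance annihilates every term in which a Lie derivative eventually hits $\eta$, while commuting the intermediate Lie derivatives past the remaining contractions via $[\mathcal L_v,\iota_w]=\iota_{[v,w]}$ produces monomials of the form $\iota_{\zeta([X_i,X_j])}\cdots\eta$ that reassemble, up to the Chevalley-Eilenberg sign pattern, into $\delta p_k$; the only surviving purely contraction term is $(-1)^{k+1}\iota_{\zeta(X_{k+1})}\cdots\iota_{\zeta(X_1)}\omega$, which matches $g_{k+1}$.

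Once the potential is in hand, the recipe of Lemma \ref{lem1} returns $f_1(X)=(\zeta(X),-p_1(X))=(\zeta(X),\iota_{\zeta(X)}\eta)$ and $f_k=-p_k$ for $k\ge 2$, matching the formulas of the statement. The $L_0$-membership condition $d(\iota_{\zeta(X)}\eta)=-\iota_{\zeta(X)}\omega$ needed so that $f_1$ is well-defined is exactly the $k=0$ relation already established.

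The main obstacle will be the sign and combinatorial bookkeeping needed to match the terms produced by iterating Cartan's formula inside $dp_{k+1}$ with the Chevalley-Eilenberg summation defining $\delta p_k$. A clean organisation is an induction on $k$, grouping terms in the Cartan expansion according to the position at which the bracket $\iota_{\zeta([X_i,X_j])}$ is produced and pinning down the global sign via the low cases $k=1$ and $k=2$, which already fix all the conventions used above.
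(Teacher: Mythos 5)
Your proposal is correct and matches the paper's approach: the paper simply cites Fr\'egier--Rogers--Zambon and remarks that the $f_k$ are built from $\eta$ exactly as the $g_k$ are built from $\omega$, which is precisely the content of your explicit potential $p=-f$ and the verification $\delta p_k+dp_{k+1}=g_{k+1}$ via the same Cartan-calculus computation that proves $\delta g_k=dg_{k+1}$ (with the extra term coming from $d\eta=\omega$). Your signs check out in low degrees and the reduction to Lemma \ref{lem1} is the natural in-paper route.
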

\begin{proof}
A proof is provided in  \cite{fregier2013homotopy} (Lemma 8.1 and Theorem 6.3) or can easily be verified upon using the fact, that the $\{f_k\}$ are constructed from $\eta$ in the same way, as the $\{g_k\}$ are constructed from $\omega$.
\end{proof}
\noindent In Section 4 of \cite{forger-paufler-roemer}, $\mathfrak X_{EH}(M)$ is defined as the space of vector fields $v$ on an exact n-plectic manifold $(M,\omega)$ such that $\mathcal{L}_v\eta=0$. The map $J:\mathfrak X_{EH}(M)\to 
\Omega^{n-1}_{Ham}(M,\omega)$ defined by $J(v)=\iota_v\eta$ is then called the ``universal momentum map''. Obviously, 
we have  $\pi_\Omega \circ f_1=J\circ\zeta:\mathfrak g\to \Omega^{n-1}_{Ham}(M,\omega)$ in the notations of Lemma \ref{exactlemma}, and in the $n$-plectic case $f_1$ is, of course, already determined by $\pi_\Omega \circ f_1$. 
\subsection{Simple Lie groups} Let $G$ be a connected simple Lie group with Lie algebra $(\mathfrak g, [\cdot,\cdot])$ and $\langle\cdot,\cdot\rangle$ its (invariant, non-degenerate) Killing-form. Setting $\omega_e(X,Y,Z)=\langle X,[Y, Z] \rangle$ we get a three-form $\omega_e$ on $\mathfrak g$. Using the non-degeneracy of the Killing-form and $[\mathfrak g, \mathfrak g]=\mathfrak g$ one shows that $\omega_e$ is non-degenerate. It extends to to a bi-invariant form $\omega\in \Omega^3(G)$,
that is closed and non-degenerate by construction and thus defines a 2-plectic structure on $G$.\newline

\noindent The group $G$ acts on itself from the left via $(g,x)\mapsto g\cdot x$. The corresponding infinitesimal action $\zeta$ extends an $X\in \mathfrak g$ to a right-invariant vector field $\zeta(X)$ on $G$. We recall that $H^1(\mathfrak g)=H^2(\mathfrak g)=0$ and $[\omega_e]_{CE}=[\langle\cdot,[\cdot,\cdot]\rangle]_{CE}\neq 0$ in $H^3(\mathfrak g)$, since $\mathfrak g$ is simple. By the Kuenneth theorem  $[g]\in H^{3}(C^\bullet)$ can be decomposed into $h_1\in H^1(\mathfrak g)\otimes H^2(G)$, $h_2 \in H^2(\mathfrak g)\otimes H^1(G)$ and $h_3\in  H^3(\mathfrak g)\otimes H^0(G)= H^3(\mathfrak g)$. As $H^1(\mathfrak g)$ and $H^2(\mathfrak g)$ vanish, we have $h_1=h_2=0$. As $[g_1]_{dR}=h_1=0$ Corollary \ref{g1cor} implies that $\zeta$ is weakly Hamiltonian. Let us now consider $h_3=[g_3]_{CE}\in H^3(\mathfrak g)$. We have $g_3(X,Y,Z)=\omega(\zeta(X),\zeta(Y),\zeta(Z))=$\-
$\omega_e(X,Y,Z)$. Thus $[g_3]_{CE}=[\omega_e]_{CE}\neq 0\in H^3(\mathfrak g)$ and $\zeta$ is not strongly Hamiltonian.

\subsection{Vanishing cohomology groups}
In \cite{fregier2013homotopy} a class $[c_p]\in H^{n+1}(\mathfrak g)$ is constructed for any point $p\in M$ of a pre-n-plectic manifold $(M,\omega)$ with a weakly Hamiltonian action $\zeta$ and a weak co-moment map $j$ by the formula $c_p(X_1,...,X_{n+1}):=-(-1)^n(-1)^{n(n+1)/2}\omega(\zeta(X_1),...,\zeta(X_{n+1}))|_p$. This class is equal to 
\begin{align*}
(-1)^ni^*h_{n+1}\in H^{n+1}(\mathfrak g)\otimes H^0_{dR}(\{p\})= H^{n+1}(\mathfrak g),
\end{align*}
 where $i^*: H^{n+1}(\mathfrak g)\otimes H^0_{dR}(M)\to H^{n+1}(\mathfrak g)\otimes H^0_{dR}(\{p\})$ is induced by the inclusion $i:\{p\}\to M$. If $M$ is connected $i^*$ is an isomorphism and we obtain Theorem 9.7 of \cite{fregier2013homotopy} as a corollary of Theorem \ref{mainthm}:

\begin{Corollary}
Let $(M,\omega)$ pe a connected pre-n-plectic manifold and $(\mathfrak g, \zeta, j)$ a weakly Hamiltonian action of $\mathfrak g$ on $M$. If $H^k_{dR}(M)$ vanish for $k\in \{1,..., {n-1}\}$ and $[c_p]=0$ for any $p\in M$, then there exists a homotopy co-moment map for $\zeta:\mathfrak g\to \mathfrak X(M,\omega)$.
\end{Corollary}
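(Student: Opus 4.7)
The plan is to deduce the Corollary directly from Theorem \ref{mainthm} together with the Künneth decomposition of $H^{n+1}(C^\bullet)$ and the relation between $[c_p]$ and $h_{n+1}$ that was established in the paragraph preceding the statement.

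First I would invoke Theorem \ref{mainthm}, which reduces the existence of a homotopy co-moment map to showing that $[g]=0$ in $H^{n+1}(C^\bullet)$. By the Künneth theorem as used earlier in this section, the class $[g]$ decomposes as a sum of classes $h_k\in H^k(\mathfrak g)\otimes H^{n+1-k}_{dR}(M)$ for $1\le k\le n+1$ (recall $h_0=0$), so it suffices to prove the vanishing of each $h_k$.

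Next I would handle the three ranges separately. For $k=1$: since the action is by hypothesis weakly Hamiltonian, Corollary \ref{g1cor} yields $[g_1]_{dR}=0$, i.e.\ $h_1=0$. For $2\le k\le n$: the class $h_k$ lives in $H^k(\mathfrak g)\otimes H^{n+1-k}_{dR}(M)$, and since $n+1-k$ ranges over $\{1,\ldots,n-1\}$, the assumption $H^{n+1-k}_{dR}(M)=0$ forces $h_k=0$. For $k=n+1$: here $h_{n+1}\in H^{n+1}(\mathfrak g)\otimes H^0_{dR}(M)$, and the identification recalled just before the corollary gives $[c_p]=(-1)^n i^*h_{n+1}$, where $i:\{p\}\hookrightarrow M$ is the inclusion of any point. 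Since $M$ is connected, $i^*:H^0_{dR}(M)\to H^0_{dR}(\{p\})$ is an isomorphism, so the assumption $[c_p]=0$ yields $h_{n+1}=0$.

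Combining the three cases we obtain $[g]=h_1+\cdots+h_{n+1}=0$ in $H^{n+1}(C^\bullet)$, and Theorem \ref{mainthm} then produces the desired homotopy co-moment map for $\zeta$. The only nontrivial ingredient is the identification $[c_p]=(-1)^n i^*h_{n+1}$, which is not a calculation I would redo in the proof but rather cite from the discussion preceding the corollary (it is a direct consequence of the formula $g_{n+1}(X_1,\ldots,X_{n+1})=-(-1)^{(n+1)(n+2)/2}\iota_{\zeta(X_{n+1})}\cdots\iota_{\zeta(X_1)}\omega$ combined with the definition of $c_p$ and the fact that $g_{n+1}$ is automatically both $d$- and $\delta$-closed up to exactness, so its class in $H^{n+1}(\mathfrak g)\otimes H^0_{dR}(M)$ coincides with its pointwise evaluation under $i^*$). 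The rest is bookkeeping.
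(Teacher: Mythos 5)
Your proposal is correct and follows exactly the route the paper intends: reduce to $[g]=0$ via Theorem \ref{mainthm}, split $[g]$ into the Künneth components $h_k$, and kill $h_1$ by weak Hamiltonicity, $h_2,\dots,h_n$ by the vanishing of $H^1_{dR}(M),\dots,H^{n-1}_{dR}(M)$, and $h_{n+1}$ by $[c_p]=0$ together with the isomorphism $i^*$ on connected $M$. No gaps; this matches the paper's (implicit) argument.
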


\subsection{Covariant momentum maps}
Given an n-plectic manifold $(M,\omega)$ and an infinitesimal n-plectic action $\zeta:\mathfrak g\to \mathfrak X(M,\omega)$, a smooth section $J:M\to \Lambda^{n-1}T^*M\otimes \mathfrak g^*$ is called a ``covariant momentum map (or multimomentum map)'' in Section 4 C of \cite{mm-clft}, if $d(\langle J,X\rangle)=\iota_{\zeta(X)}\omega$ for all $X\in \mathfrak g$. We thus find that $J$ corresponds to  $f_1:\mathfrak g\to \Omega^{n-1}_{Ham}(M,\omega)$,  since
$(\pi_{\Omega}\circ f_1(X))(p)=\langle J,X\rangle(p)=\langle J(p),X\rangle$ $\forall X\in \mathfrak g, ~\forall p\in M$ and
$\pi_{\Omega}\circ f_1$ determines $f_1$ in the $n$-plectic case. If, furthermore, $\omega$ has an invariant potential $\eta$, it is noted in  \cite{mm-clft} that $J(X)=\iota_{\zeta(X)}\eta$ is a covariant momentum map called ``special''. This corresponds to the 
first component of the $L_\infty$-morphism explicitly given in Lemma \ref{exactlemma}.

\subsection{Covariant multimomentum maps}
\noindent Let $(M,\omega)$ be a pre-n-plectic manifold. We consider the following short exact sequence  of Lie n-algebras:
\begin{align*}
\xymatrix{
0\ar[r]& \bigoplus_{i=0}^{n-2}\Omega^i(M) \oplus \Omega^{n-1}_{cl}(M)\ar[r]& L(M,\omega) \ar[r]&\mathfrak X_{Ham}(M,\omega)\ar[r]& 0}
\end{align*}
and mod out the following exact subsequence

\begin{align*}
\xymatrix{
0\ar[r]& \bigoplus_{i=0}^{n-2}\Omega^i(M) \oplus \Omega^{n-1}_{ex}(M)\ar[r]& \bigoplus_{i=0}^{n-2}\Omega^i(M)\oplus\left(  \Omega^{n-1}_{ex}(M)\oplus ker (\iota_\bullet \omega)\right)\ar[r]&ker (\iota_\bullet \omega)\ar[r]& 0}
\end{align*}

\noindent to obtain the following exact sequence of Lie algebras:
\begin{align*}
\xymatrix{
0\ar[r]& H^{n-1}_{dR}(M)\ar[r]&\frac{L_0}{\Omega^{n-1}_{ex}(M)\oplus ker(\iota_{\bullet}\omega)}\ar[r]&\frac{\mathfrak X_{Ham}(M,\omega)}{ker(\iota_{\bullet}\omega)}\ar[r]& 0 \,\, . &}
\end{align*}
Here the notation $ker (\iota_\bullet \omega)$ is also used for $\{(0,v)~|~\iota_v\omega=0\}\subset L_0$, isomorphic to $ker (\iota_\bullet \omega)\subset \mathfrak X_{Ham}(M,\omega)$ via $\pi_\mathfrak X$. Note that this subspace has trivial intersection with $\{(0,\alpha)~|~d\alpha=0\}\subset L_0$, isomorphic to the space of closed forms $\Omega^{n-1}_{cl}(M)$.  Notably $ker(\iota_\bullet \omega)\cap \Omega^{n-1}_{ex}(M)=\{0\}$ inside $L_0$.\newline\newline
Let $\zeta:\mathfrak g\to \mathfrak X_{Ham}(M,\omega)=ker(\gamma)\subset \mathfrak X(M,\omega)$ be an infinitesimal n-plectic action. Since the sequence is exact, there always exists a linear lift $\bar j$ of $\bar \zeta=\pi\circ \zeta$ along $\frac{L_0}{\Omega^{n-1}_{ex}(M)\oplus ker(\iota_{\bullet}\omega)}\to\frac{\mathfrak X_{Ham}(M,\omega)}{ker(\iota_{\bullet}\omega)}$, where $\pi:\mathfrak X_{Ham}(M,\omega)\to \frac{\mathfrak X_{Ham}(M,\omega)}{ker(\iota_{\bullet}\omega)}$ is the projection. Putting this together, we get the following diagram, where the top row is exact and all morphisms (except possibly $\bar j$) are Lie algebra morphisms:
\begin{align*}
\xymatrix{
0\ar[r]& H^{n-1}_{dR}(M)\ar[r]&\frac{L_0}{\Omega^{n-1}_{ex}(M)\oplus ker(\iota_{\bullet}\omega)}\ar[r]&\frac{\mathfrak X_{Ham}(M,\omega)}{ker(\iota_{\bullet}\omega)}\ar[r]& 0\\
&&&\mathfrak g\ar[u]^{\bar \zeta}\ar[ul]^{\bar j}&}
\end{align*}

\noindent which corresponds to the diagram on page 19 of \textcolor{black}{ \cite{MR1244450}}. The dual of such a map $\bar j$ is then a \textit{covariant multimomentum map} in the sense of \textcolor{black}{Cari\~nena, Crampin and Ibort}. Writing $\bar l_2$ for the Lie bracket on $\frac{L_0}{\Omega^{n-1}_{ex}(M)\oplus ker(\iota_{\bullet}\omega)}$ as in \textcolor{black}{ \cite{MR1244450}} one defines the cocycle $\bar c=((X,Y)\mapsto\bar l_2(\bar j(X),\bar j(Y))-\bar j([X,Y]))\in $ $\Lambda^2(\mathfrak g, H^{n-1}_{dR}(M))= \Lambda^2(\mathfrak g^*)$ $\otimes H^{n-1}_{dR}(M)$.  The cohomology class  $[\bar c]_{CE}\in H^2(\mathfrak g)\otimes H^{n-1}_{dR}(M)$ of this cocycle is the obstruction against a Lie algebra homomorphism $f$ lifting $\bar \zeta$. We claim that $[\bar c]_{CE}=h_2$. To show this we put $ c:=j^* l_2+\delta j=((X,Y)\mapsto l_2( j(X), j(Y))- j([X,Y]))$, where $j:\mathfrak g \to \Omega^{n-1}_{Ham}(M,\omega)$ is a lift of $\bar j$. Using the fact that $j^*l_2=g_2$ we calculate
\begin{align*}
[\bar c]_{CE}=[(id\otimes q_{\Omega})(c)]_{CE}=(q_{\Lambda}\otimes id)(id\otimes q_{\Omega})(c)=(id\otimes q_{\Omega}) (q_{\Lambda}\otimes id)(j^*l_2+\delta j)&\\=(id\otimes q_{\Omega}) (q_{\Lambda}\otimes id)(j^*l_2)=(q_{\Lambda}\otimes q_{\Omega})(j^*l_2)=(q_{\Lambda}\otimes q_{\Omega})(g_2)&=h_2.
\end{align*}
Thus we proved the following result of \cite{MR1244450}.

\begin{Corollary}
Let $\zeta:\mathfrak g\to \mathfrak X(M,\omega)$ be a weakly Hamiltonian infinitesimal action. Then there exists a Lie algebra homomorphism $f:\mathfrak g \to \frac{L_0}{\Omega^{n-1}_{ex}(M)\oplus ker(\iota_{\bullet}\omega)}$ which is the dual of a covariant multimomentum map if and only if the second obstruction class, $h_2$, vanishes.
\end{Corollary}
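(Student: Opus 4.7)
My plan is to recognize the corollary as a central-extension lifting problem and reduce it to the identification of the author's cocycle with the Künneth component $h_2$ already introduced in Section~3. First, the short exact sequence of Lie algebras displayed just above the corollary realises $L_0/(\Omega^{n-1}_{ex}(M)\oplus\ker(\iota_{\bullet}\omega))$ as an abelian (in fact central) extension of $\mathfrak X_{Ham}(M,\omega)/\ker(\iota_{\bullet}\omega)$ by $H^{n-1}_{dR}(M)$. Composing $\zeta$ with the projection $\pi$ yields the Lie algebra morphism $\bar\zeta$, and asking for $f$ is exactly asking for a Lie algebra lift of $\bar\zeta$ through this abelian extension. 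I would open the proof by making this identification explicit.

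Next, I would invoke the standard obstruction theory for abelian extensions. Any linear lift $\bar j$ of $\bar\zeta$ exists by vector-space splitting, and the failure of $\bar j$ to be a Lie algebra homomorphism is measured by $\bar c(X,Y)=\bar l_2(\bar j(X),\bar j(Y))-\bar j([X,Y])$, which by exactness takes values in $H^{n-1}_{dR}(M)$ and is a Chevalley--Eilenberg $2$-cocycle on $\mathfrak g$ with coefficients in that abelian Lie algebra. Modifying $\bar j$ by a linear map $\mathfrak g\to H^{n-1}_{dR}(M)$ changes $\bar c$ by a $\delta$-coboundary, so a Lie algebra lift $f$ exists if and only if $[\bar c]_{CE}=0$ in $H^2(\mathfrak g)\otimes H^{n-1}_{dR}(M)$.

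The remaining -- and main -- task is to identify $[\bar c]_{CE}$ with the class $h_2$ appearing in the Künneth decomposition of $[g]\in H^{n+1}(C^\bullet)$. I would choose an actual lift $j\colon\mathfrak g\to\Omega^{n-1}_{Ham}(M,\omega)$ of $\bar j$ (possible by the weakly Hamiltonian hypothesis together with Corollary~\ref{g1cor}) and introduce the cochain $c:=j^*l_2+\delta j\in\Lambda^2\mathfrak g^*\otimes\Omega^{n-1}(M)$, which projects to $\bar c$ under $id\otimes q_\Omega$. The key calculation, already displayed in the paragraph preceding the corollary, uses two observations: the term $\delta j$ lies in $(\Lambda^2\mathfrak g^*)_{ex}\otimes\Omega^{n-1}(M)$ and hence is annihilated by $q_\Lambda\otimes id$, while the explicit formula for $l_2$ gives the identity $j^*l_2=g_2$. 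Applying $q_\Lambda\otimes q_\Omega$ therefore yields $(q_\Lambda\otimes q_\Omega)(g_2)=h_2$, completing the proof. The only genuinely delicate point is bookkeeping: making sure that the two a priori different uses of the quotients $q_\Lambda$ and $q_\Omega$ (one coming from the cohomology of the central extension, the other from the Künneth representative) land in the same subgroup $H^2(\mathfrak g)\otimes H^{n-1}_{dR}(M)$ of the quotient $\tfrac{\Lambda^2\mathfrak g^*}{(\Lambda^2\mathfrak g^*)_{ex}}\otimes\tfrac{\Omega^{n-1}(M)}{\Omega^{n-1}_{ex}(M)}$, which is handled exactly by the exact sequences at the end of Section~3.
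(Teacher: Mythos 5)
Your proposal is correct and follows essentially the same route as the paper: the paper likewise realises the problem as lifting $\bar\zeta$ through the abelian extension by $H^{n-1}_{dR}(M)$, takes the obstruction cocycle $\bar c=\bar l_2(\bar j(\cdot),\bar j(\cdot))-\bar j([\cdot,\cdot])$, and identifies $[\bar c]_{CE}$ with $h_2$ via the computation $(q_\Lambda\otimes q_\Omega)(j^*l_2+\delta j)=(q_\Lambda\otimes q_\Omega)(g_2)=h_2$ using $j^*l_2=g_2$ and the vanishing of $\delta j$ under $q_\Lambda\otimes id$. No substantive differences.
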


\subsection{Multi-moment maps}
Upon restricting to the top component of a homotopy co-moment map, one obtains the following notion of \textcolor{black}{Madsen and Swann (\cite{MR3079342})}.\newline\newline
Let $M,\omega$ be an n-plectic manifold. A \textit{multi-moment map} is a map $v: M \to P_{\mathfrak g}^*$, where $P_{\mathfrak g}=P_{\mathfrak g, n}\subset \Lambda^n\mathfrak g$ is the kernel of $\delta^*:\Lambda^n\mathfrak g\to \Lambda^{n-1}\mathfrak g$, satisfying certain conditions. It can be equivalently understood as a map $\overline v:P_\mathfrak g \to C^\infty(M)$ fulfilling 
\begin{enumerate}[(i)]
	\item $d(\bar v(p))=\iota_{\zeta(p)}\omega$ for all $p\in P_\mathfrak g$
	\item $\bar v(ad_X(p))=\mathcal L_{\zeta(X)}(\bar v(p))$ for all $X\in \mathfrak g$ and $p\in P_\mathfrak g$.
\end{enumerate}
Here we define for $X,X_j^\alpha\in \mathfrak g$ and $p=\sum_{\alpha}X_1^\alpha\wedge ...\wedge X_n^\alpha\in \Lambda^n\mathfrak g$, 
\begin{align*}
\iota_{\zeta(p)}\omega:=\sum_\alpha \iota_{\zeta(X_n^\alpha)}... \iota_{\zeta(X_1^\alpha)}\omega~~~~~\text{ and }~~~~ad_X(p):=\left(\Lambda^n(ad_X)\right)(p)~~.
\end{align*}
Given a strong co-moment $\{f_1,...,f_n\}$, it is easy to check that $\bar v:=-(-1)^{n(n+1)/2}f_n|_{P_\mathfrak g}$ fulfills the two preceeding conditions. In fact, one only needs the conditions $\delta f_n=-g_{n+1}$ and $\delta f_{n-1}+df_n=-g_n$ here. Hence, for a multi-moment map in the above sense to exist we need not require $h_i=0$ for all $i$, but only $h_{n+1}=0$ and $h_{n}=0$ suffice. Let us assume $h_{n+1}=0$ and $h_{n}=0$ and construct $f_n$ and $f_{n-1}$ ``by hand''. If 
\begin{align*}
0=h_{n+1}=(q_\Lambda\otimes q_{\Omega})(g_{n+1})\in H^{n+1}(\mathfrak g)\otimes H^0_{dR}(M)= H^{n+1}(\mathfrak g)\otimes C^{\infty}_{cl}(M),
\end{align*}
there exists a $\delta$-potential $\hat f_n\in \Lambda^{n+1}\mathfrak g^*\otimes C^\infty_{cl}(M)$ of $-g_{n+1}$. It holds that 
 \begin{align*}
(\bar \delta \otimes id)(q_\Lambda\otimes id)(-g_n-d\hat f_n)&=(-\delta g_n -\delta d\hat f_n)\\
=(-dg_{n+1} -d\delta \hat f_n)&=(-dg_{n+1} +dg_{n+1})=0\in \Lambda^{n+1}\mathfrak g^*\otimes \Omega^1(M).
\end{align*}
Thus $(q_\Lambda\otimes id)(-g_n-d\hat f_n)$ is an element of $H^n(\mathfrak g)\otimes \Omega^1(M)\subset\frac{\Lambda^n\mathfrak g^*}{(\Lambda^n\mathfrak g^*)_{ex}}\otimes \Omega^1(M) $. Next we observe that 
 \begin{align*}
(id \otimes \bar d)(q_\Lambda\otimes id)(-g_n-d\hat f_n)=(q_\Lambda\otimes id)(-dg_n)=(q_\Lambda\otimes id)(-\delta g_{n-1})=0\in H^n(\mathfrak g) \otimes \Omega^{2}(M).
\end{align*}
Consequently $(q_\Lambda\otimes id)(-g_n-d\hat f_n)$ is an element of $H^n(\mathfrak g)\otimes \Omega^{1}_{cl}(M)$. By the exactness of the sequence 
\begin{align*}
\xymatrix{H^{n}(\mathfrak g)\otimes C^\infty(M)\ar[r]^{id\otimes d}&H^{n}(\mathfrak g)\otimes \Omega^{1}_{cl}(M) \ar[r]^{id\otimes q_\Omega} & H^{n}(\mathfrak g)\otimes H^1_{dR}(M) }
\end{align*}
and $(id\otimes q_\Omega)(q_\Lambda\otimes id)(-g_n-d\hat f_n)=(q_{\Lambda}\otimes q_{\Omega})(-g_n)=-h_n=0$ the element $(q_\Lambda\otimes id)(-g_n-d\hat f_n)$ has a $d$-potential in $H^n(\mathfrak g)\otimes C^\infty(M)$. Let $p\in (\Lambda^n\mathfrak g^*)_{cl}\otimes C^\infty(M)$ be a $\delta$-representative of such a potential. We set $f_n:=\hat f_n + p$. As $p$ is $\delta$-closed, we have $\delta f_n=\delta \hat f_n=-g_{n+1}$. It follows
\begin{align*}
(q_\Lambda\otimes id)(-g_n-df_n)=(q_\Lambda\otimes id)(-g_n-d\hat f_n) - (q_\Lambda\otimes id)(id\otimes d)p&\\=(q_\Lambda\otimes id)(-g_n-d\hat f_n)-(q_\Lambda\otimes id)(-g_n-d\hat f_n)&=0\in H^n(\mathfrak g)\otimes \Omega^1_{cl}(M),
\end{align*}
implying that there exists a $\delta$-potential $\hat f_{n-1}$ of $(-g_n-df_n)$. Putting $f_{n-1}:=\hat f_{n-1}$ we have found two maps $f_n$ and $f_{n-1}$ satisfying $\delta f_n=-g_{n+1}$ and $\delta f_{n-1} + df_{n}=-g_n$  and hence proven:
\begin{Lemma}\label{mslemma}
Let $(M,\omega)$ be an n-plectic manifold. If $h_{n+1}=0$ and $h_{n}=0$, then there exists a multi-moment map in the sense of  \textcolor{black}{Madsen and Swann}.
\end{Lemma}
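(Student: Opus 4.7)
The plan is to exploit the observation (flagged by the author just before the lemma) that a Madsen--Swann multi-moment map is obtained from just the top component of a partial strong co-moment: whenever one can find $f_n \in \Lambda^n\mathfrak g^* \otimes C^\infty(M)$ and $f_{n-1} \in \Lambda^{n-1}\mathfrak g^* \otimes \Omega^1(M)$ satisfying only the top two $L_\infty$-morphism equations $\delta f_n = -g_{n+1}$ and $\delta f_{n-1} + df_n = -g_n$, the restriction $\bar v := -(-1)^{n(n+1)/2}\,f_n|_{P_{\mathfrak g}}$ fulfils conditions (i) and (ii). Condition (i) holds because on $P_{\mathfrak g} = \ker\delta^*$ the term $\delta f_{n-1}$ vanishes by duality, so that $df_n|_{P_{\mathfrak g}}$ equals $-g_n|_{P_{\mathfrak g}}$, which up to sign is $\iota_{\zeta(\cdot)}\omega$; condition (ii) is obtained by evaluating $\delta f_n = -g_{n+1}$ on $X \wedge q$ with $q \in P_{\mathfrak g}$ and applying Cartan's magic formula in conjunction with (i).

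The task therefore reduces to constructing $f_n$ and $f_{n-1}$. Since $\Omega^0_{ex}(M) = 0$, the hypothesis $h_{n+1} = 0$ in $H^{n+1}(\mathfrak g)\otimes H^0_{dR}(M)$ is literally the assertion that $-g_{n+1}$ admits a $\delta$-potential $\hat f_n \in \Lambda^n\mathfrak g^*\otimes C^\infty_{cl}(M)$; in particular $d\hat f_n = 0$. I would then seek a $\delta$-closed correction $\rho \in (\Lambda^n\mathfrak g^*)_{cl}\otimes C^\infty(M)$ such that setting $f_n := \hat f_n + \rho$ makes $-g_n - df_n = -g_n - d\rho$ admit a $\delta$-potential $f_{n-1}$ (while $\delta f_n = -g_{n+1}$ is preserved, since $\delta\rho = 0$). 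Using the cocycle identities $\delta g_k = d g_{k+1}$, a short chase in the doubly-reduced complex $\frac{\Lambda^\bullet\mathfrak g^*}{(\Lambda^\bullet\mathfrak g^*)_{ex}} \otimes \Omega^\bullet(M)$ shows that $(q_\Lambda \otimes id)(-g_n - d\hat f_n)$ is $\bar d$-closed and represents the class $-h_n$ in $H^n(\mathfrak g)\otimes H^1_{dR}(M)$. The vanishing $h_n = 0$, combined with the exactness of
\begin{equation*}
H^n(\mathfrak g)\otimes C^\infty(M) \,\xrightarrow{id\otimes d}\, H^n(\mathfrak g)\otimes \Omega^1_{cl}(M) \,\xrightarrow{id\otimes q_\Omega}\, H^n(\mathfrak g)\otimes H^1_{dR}(M),
\end{equation*}
then supplies a $\delta$-representative $\rho$ of the required $d$-preimage, after which any $\delta$-potential of $-g_n - df_n$ serves as $f_{n-1}$.

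The main technical obstacle is the bookkeeping inside the mixed quotient complex: one must simultaneously mod out $\delta$- and $d$-exactness in order to make sense of $h_n$ as a well-defined element of $H^n(\mathfrak g)\otimes H^1_{dR}(M)$, identify the class of the concrete cochain $-g_n - d\hat f_n$ with $-h_n$, and finally lift a solution of the resulting cohomological equation back to honest cochains $\rho$ without disturbing the top-degree equation already arranged. Once this bookkeeping is in place, the argument reduces to repeated applications of the fundamental relations $\delta g_k = d g_{k+1}$ together with the exact sequences defining $\bar\delta$ and $\bar d$.
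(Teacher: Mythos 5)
Your proposal is correct and follows essentially the same route as the paper: reduce to the two top equations $\delta f_n=-g_{n+1}$ and $\delta f_{n-1}+df_n=-g_n$, obtain $\hat f_n$ from $h_{n+1}=0$, correct it by a $\delta$-closed $\rho$ using $h_n=0$ and the displayed exact sequence, and take $f_{n-1}$ to be a $\delta$-potential of the remainder. The only caveat is that $h_{n+1}=0$ does not force $\hat f_n$ to take values in $C^\infty_{cl}(M)$ (since $dg_{n+1}=\delta g_n$ need not vanish), so you should not assume $d\hat f_n=0$ but rather carry the term $-g_n-d\hat f_n$ through the diagram chase -- as the paper in fact does -- which changes nothing else in the argument.
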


\begin{Remark}
If further classes $h_i$ are zero, we can continue the above procedure instead of setting $f_{n-1}=\hat f_{n-1}$ and construct further components of the co-moment map. If all $h_i$ are zero this gives an iterative procedure to construct a homotopy co-moment map. 
\end{Remark}

\noindent A direct consequence of Lemma \ref{mslemma} is the existence statement of  \textcolor{black}{Theorem 3.14 of \cite{MR3079342}}.
\begin{Corollary}
Let $(M,\omega)$ be n-plectic. If $H^n(\mathfrak g)$ and $H^{n+1}(\mathfrak g)$ are zero then there exists a multi-moment map.
\end{Corollary}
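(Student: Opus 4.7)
The plan is to reduce the statement directly to Lemma \ref{mslemma} by showing that the two hypotheses $H^n(\mathfrak g)=0$ and $H^{n+1}(\mathfrak g)=0$ force the cohomological obstructions $h_n$ and $h_{n+1}$ to vanish.

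Recall from the Künneth-type decomposition discussed right after Theorem \ref{mainthm} that $[g]\in H^{n+1}(C^\bullet)$ decomposes as a sum of components $h_i\in H^{i}(\mathfrak g)\otimes H^{n+1-i}_{dR}(M)$, with $h_k$ represented by $(q_\Lambda\otimes q_\Omega)(g_k)$. In particular
\begin{align*}
h_n \in H^n(\mathfrak g)\otimes H^1_{dR}(M) \qquad \text{and} \qquad h_{n+1}\in H^{n+1}(\mathfrak g)\otimes H^0_{dR}(M).
\end{align*}
First I would note that the vanishing hypotheses on $H^n(\mathfrak g)$ and $H^{n+1}(\mathfrak g)$ immediately imply that both tensor-product groups in which $h_n$ and $h_{n+1}$ live are zero, so $h_n=0$ and $h_{n+1}=0$ without any further argument. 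Then one simply applies Lemma \ref{mslemma} to obtain a multi-moment map in the sense of Madsen and Swann, concluding the proof.

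There is no serious obstacle here: the content of the corollary lies in Lemma \ref{mslemma} (and ultimately in the careful homological construction of $f_n$ and $f_{n-1}$ from $\delta$- and $d$-potentials), whereas the corollary itself is only a convenient algebraic sufficient condition. The only subtlety worth flagging in the write-up is that we do not need $\zeta$ to be \emph{a priori} weakly Hamiltonian: the construction in Lemma \ref{mslemma} produces $f_n$ and $f_{n-1}$ from the hypotheses $h_{n+1}=0$ and $h_n=0$ alone, which is exactly what is needed to define $\bar v := -(-1)^{n(n+1)/2} f_n|_{P_{\mathfrak g}}$ and verify properties (i) and (ii) of a multi-moment map.
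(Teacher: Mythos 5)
Your proposal is correct and matches the paper's argument exactly: the paper also derives the corollary as an immediate consequence of Lemma \ref{mslemma}, since $h_n\in H^n(\mathfrak g)\otimes H^1_{dR}(M)$ and $h_{n+1}\in H^{n+1}(\mathfrak g)\otimes H^0_{dR}(M)$ vanish whenever $H^n(\mathfrak g)$ and $H^{n+1}(\mathfrak g)$ do. Your additional remark that weak Hamiltonicity is not needed a priori is consistent with how Lemma \ref{mslemma} is stated and proved in the paper.
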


\bibliography{mybib}{}
\bibliographystyle{plain}

\end{document}